\newcommand{\matlab}{\textsc{matlab}\textregistered\xspace}
\newcommand{\mat}[1]{\ensuremath{\boldsymbol{#1}}}
\newcommand{\clifford}[2]{\ensuremath{C\kern-0.1em\ell_{#1,#2}}}
\newcommand{\clical}{\textsc{CLICAL}\xspace}
\newtheorem{theorem}{Theorem}
\newtheorem{lemma}{Lemma}
\newtheorem{proposition}{Proposition}
\newcommand{\dif}{\textrm{d}\xspace} % The d operator in integrals.
\title{Complex and Hypercomplex Discrete Fourier Transforms
       Based on Matrix Exponential Form of Euler's Formula}
\author{Stephen~J.~Sangwine\thanks{Stephen~J.~Sangwine is with the
        School of Computer Science and Electronic Engineering,
        University of Essex, Wivenhoe Park,
        Colchester, CO4 3SQ, United Kingdom.
        Email:~sjs@essex.ac.uk}
        \and
        Todd~A.~Ell\thanks{Todd~A.~Ell resides at 5620 Oak View Court, Savage, MN 55378-4695, USA.
        Email:~T.Ell@IEEE.org}
}
\begin{document}
\maketitle
\begin{abstract}
We show that the discrete complex,
and numerous hypercomplex,
Fourier transforms defined and used
so far by a number of
researchers can be unified into a single framework
based on a matrix exponential version of Euler's formula $e^{j\theta}=\cos\theta+j\sin\theta$,
and a matrix root of $-1$ isomorphic to the imaginary root $j$.
The transforms thus defined can be computed
using standard matrix
multiplications and additions with no hypercomplex code,
the complex or hypercomplex algebra being represented
by the form of the matrix root of $-1$,
so that the matrix multiplications
are equivalent to multiplications in the appropriate algebra.
We present examples from the complex, quaternion and biquaternion algebras,
and from Clifford algebras \clifford{1}{1} and \clifford{2}{0}.
The significance of this result is both in the theoretical unification,
and also in the scope it affords for insight into the structure of the various transforms,
since the formulation is such a simple generalization of the classic complex case.
It also shows that hypercomplex discrete Fourier transforms may be {computed}
using standard matrix arithmetic packages {without the need for a hypercomplex library},
which is of importance in providing
a reference implementation for {verifying}
implementations based on hypercomplex code.
\end{abstract}
\section{Introduction}
The discrete Fourier transform
is widely known and used in signal and image processing,
and in many other fields where data is analyzed for frequency content \cite{Bracewell:2000}.
The discrete Fourier transform in one dimension is classically formulated as:
\begin{equation}
\label{eqn:classicdft}
\begin{aligned}
F[u] &= S\sum_{m=0}^{M-1}f[m]\exp\left(          -j 2\pi\frac{mu}{M}\right)\\
f[m] &= T\sum_{u=0}^{M-1}F[u]\exp\left(\phantom{-}j 2\pi\frac{mu}{M}\right)
\end{aligned}
\end{equation}
where $j$ is the imaginary root of $-1$,
$f[m]$ is real or complex valued with $M$ samples,
$F[u]$ is complex valued, also with $M$ samples,
and the two scale factors $S$ and $T$ must multiply to $1/M$.
If the transforms are to be unitary then $S$ must equal $T$ also.
In this paper we {discuss the formulation of the transform}
using a matrix exponential form of Euler's formula in which the imaginary square root of
$-1$ is replaced by {an isomorphic} matrix root.
{This formulation works for the complex DFT, but more importantly,
it works for hypercomplex DFTs (reviewed in §\,\ref{hypercomplex}).}
The matrix exponential formulation is equivalent to all the known hypercomplex
generalizations of the DFT known to the authors, based on quaternion,
biquaternion or Clifford algebras,
through a suitable choice of matrix root of $-1$,
isomorphic to a root of $-1$ in the corresponding hypercomplex algebra.
{All associative hypercomplex algebras (and indeed the complex algebra)
are known to be isomorphic to matrix algebras over the reals or the complex numbers.
For example, Ward \cite[\S\,2.8]{Ward:1997} discusses isomorphism
between the quaternions and $4\times4$ real or $2\times2$ complex matrices
so that quaternions can be replaced by matrices, the rules of matrix
multiplication then being equivalent to the rules of quaternion
multiplication \emph{by virtue of the pattern of
the elements of the quaternion within the matrix}.
Also in the quaternion case, Ickes \cite{Ickes:1970} wrote an important paper
showing how multiplication of quaternions
could be accomplished using a matrix-vector or vector-matrix
product that could accommodate reversal of the product ordering
by a partial transposition within the matrix.
This paper, more than any other, led us to the {observations}
presented here.}

{The fact that a hypercomplex DFT may be formulated using a matrix
exponential may not be surprising.
Nevertheless, to our knowledge, those who have worked on hypercomplex
DFTs have not so far noted or exploited the observations made in this
paper, which is surprising, given the ramifications discussed later.}

\section{{Hypercomplex transforms}}
\label{hypercomplex}
{
The first published descriptions of hypercomplex transforms that we are
aware of date from the late 1980s, using quaternions. In all three known
earliest formulations, the transforms were defined for two-dimensional
signals (that is functions of two independent variables).
The two earliest formulations \cite[§\,6.4.2]{Ernst:1987} and
\cite[Eqn.~20]{Delsuc:1988} are almost equivalent
(they differ only in the placing of the exponentials and the signal
and the signs inside the exponentials)\footnote{{In comparing
the various formulations of hypercomplex transforms, we have changed
the symbols used by the original authors in order to make the comparisons
clearer. We have also made trivial changes such as the choice of basis
elements used in the exponentials.}}:
\[
F(\omega_1,\omega_2) = \int_{-\infty}^{\infty}\int_{-\infty}^{\infty}
                       f(t_1, t_2)
                       e^{\i\omega_1 t_1}e^{\j\omega_2 t_2}\dif t_1\dif t_2
\]
In a non-commutative algebra the ordering of exponentials within
an integral is significant, and of course, two exponentials with
different roots of -1 cannot be combined trivially. Therefore there
are other possible transforms that can be defined by positioning
the exponentials differently.
The first transform in which the exponentials were placed either
side of the signal function was that of Ell \cite{Ell:thesis,Ell:1993}:
\[
F(\omega_1,\omega_2) = \int_{-\infty}^{\infty}\int_{-\infty}^{\infty}
                       e^{\i\omega_1 t_1}
                       f(t_1, t_2)
                       e^{\j\omega_2 t_2}\dif t_1\dif t_2
\]
}%
This style of transform was followed by Chernov, B\"{u}low and Sommer
\cite{Chernov:1995,Bulow:1999,BulowSommer:2001} and others since.
In 1998 the present authors described a single-sided hypercomplex
transform for the first time \cite{SangwineEll:1998} exactly as in
\eqref{eqn:classicdft} except that $f$ and $F$ were quaternion-valued
and $j$ was replaced by a general quaternion root of $-1$.
{Expressed in the same form as the transforms above, this would be:
\[
F(\omega_1,\omega_2) = \int_{-\infty}^{\infty}\int_{-\infty}^{\infty}
                       e^{\mu(\omega_1 t_1 + \omega_2 t_2)}
                       f(t_1, t_2) \dif t_1\dif t_2
\]
where $\mu$ is now an arbitrary root of -1, not necessarily a basis
element of the algebra. The realisation that an arbitrary root of -1
could be used meant that it was possible to define a hypercomplex
transform applicable to one dimension:
\[
F(\omega) = \int_{-\infty}^{\infty}e^{\mu\omega t} f(t) \dif t
\]
}
Pei \textit{et al} have studied efficient implementation of quaternion
FFTs and presented a transform based on commutative reduced
biquaternions \cite{PeiDingChang:2001,PeiChangDing:2004}.
Ebling and Scheuermann defined a Clifford Fourier transform
\cite[\S\,5.2]{10.1109/TVCG.2005.54},
but their transform used the pseudoscalar
(one of the basis elements of the algebra)
as the square root of $-1$.

{A}part from the works by the present authors
\cite{SangwineEll:1998,SangwineEll:2000b,10.1109/TIP.2006.884955},
the idea of using a root of $-1$ different to the basis elements of a
hypercomplex algebra was not developed {further until 2006},
with the publication of a
paper setting out the roots of $-1$ in biquaternions
(a quaternion algebra with complex numbers as the components of the quaternions)
\cite{10.1007/s00006-006-0005-8}.
This work prepared the ground for a biquaternion Fourier transform
\cite{10.1109/TSP.2007.910477}
based on the present authors' one-sided quaternion transform \cite{SangwineEll:1998}.
More recently, the idea of finding roots of $-1$ in other algebras
has been advanced in Clifford algebras by Hitzer and Ab{\l}amowicz
\cite{10.1007/s00006-010-0240-x} with the express intent of using them in
Clifford Fourier transforms, perhaps generalising the ideas of
Ebling and Scheuermann \cite{10.1109/TVCG.2005.54}.
Finally, in this very brief summary of prior work we mention
that the idea of applying hypercomplex algebras in signal
processing has been studied by other authors apart from those
referenced above.
For an overview see \cite{AlfsmannGocklerSangwineEll:2007}.

{In what follows we concentrate on DFTs in one dimension
for simplicity, returning to the two dimensional case in §\,\ref{twodimdft}.}
\section{Matrix formulation of the discrete Fourier transform}
\subsection{Matrix form of Euler's formula}
\label{sec:mateuler}
The transform presented in this paper depends on a generalization of
Euler's formula: $\exp i\theta = \cos\theta + i\sin\theta$,
in which the imaginary root of $-1$ is replaced by a matrix root,
that is, a matrix that squares to give a negated identity matrix.
Even among $2\times2$ matrices there is an infinite number of such
roots \cite[p16]{Nahin:2006}.
In the matrix generalization, the exponential must, of course,
be a matrix exponential \cite[\S\,11.3]{Golub:1996}.
{The following Lemma is not claimed to be original but we have not been
able to locate any published source that we could cite here.
Since the result is essential to Theorem~\ref{theorem:matrixdft},
we set it out in full.}
\begin{lemma}
\label{lemma:euler}
Euler's formula may be generalized as follows:
\begin{equation*}
e^{\mat{J}\theta} = \mat{I}\cos\theta + \mat{J}\sin\theta
\end{equation*}
where \mat{I} is an identity matrix, and $\mat{J}^2 = -\mat{I}$.
\end{lemma}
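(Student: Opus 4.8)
The plan is to mimic the classical power-series proof of Euler's formula, replacing scalar powers of $i$ by matrix powers of $\mat{J}$. First I would take the matrix exponential in its defining form as the everywhere absolutely convergent power series $e^{\mat{J}\theta} = \sum_{k\ge 0} \mat{J}^{k}\theta^{k}/k!$, noting that absolute convergence in any submultiplicative matrix norm --- since $\|\mat{J}^{k}\theta^{k}/k!\| \le (\|\mat{J}\|\,|\theta|)^{k}/k!$ and the scalar series $\sum_{k\ge0}(\|\mat{J}\|\,|\theta|)^{k}/k!$ converges --- licenses us to reorder and regroup the terms freely.

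The next step is to compute the powers of $\mat{J}$. From the hypothesis $\mat{J}^{2} = -\mat{I}$ an easy induction gives $\mat{J}^{2n} = (-1)^{n}\mat{I}$ and $\mat{J}^{2n+1} = (-1)^{n}\mat{J}$, exactly the pattern that $i^{k}$ follows. Splitting the series into its even-indexed and odd-indexed parts and factoring out the constant matrices $\mat{I}$ and $\mat{J}$ then yields $e^{\mat{J}\theta} = \mat{I}\sum_{n\ge0}(-1)^{n}\theta^{2n}/(2n)! + \mat{J}\sum_{n\ge0}(-1)^{n}\theta^{2n+1}/(2n+1)!$. Recognising the two scalar series as the Maclaurin expansions of $\cos\theta$ and $\sin\theta$ gives $e^{\mat{J}\theta} = \mat{I}\cos\theta + \mat{J}\sin\theta$, as required.

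The only point needing care --- and the one I would flag as the main obstacle, though it is entirely standard --- is the justification of the term rearrangement: the split into even and odd parts is valid precisely because the matrix exponential series converges absolutely for every square matrix, so the argument is self-contained once that fact is invoked. As an alternative I would mention the differential-equation route: setting $\mat{G}(\theta) = \mat{I}\cos\theta + \mat{J}\sin\theta$, one checks $\mat{G}(0) = \mat{I}$ and, using $\mat{J}^{2} = -\mat{I}$, that $\mat{G}'(\theta) = -\mat{I}\sin\theta + \mat{J}\cos\theta = \mat{J}\,\mat{G}(\theta)$; uniqueness of solutions of the linear matrix initial-value problem $\mat{X}' = \mat{J}\mat{X}$, $\mat{X}(0) = \mat{I}$, then forces $\mat{G}(\theta) = e^{\mat{J}\theta}$. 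Either route is short; I would present the power-series one, since it makes transparent that the matrix identity holds for the same reason, and in the same shape, as the scalar formula it generalises.
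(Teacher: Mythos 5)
Your proof is correct and follows essentially the same route as the paper: expand the matrix exponential as its defining power series, use $\mat{J}^2=-\mat{I}$ to reduce the powers of $\mat{J}$, split into even and odd terms, and recognise the cosine and sine series. The extra remarks on absolute convergence and the alternative ODE argument are sound but not needed beyond what the paper already does.
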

\begin{proof}
The result follows from the series expansions of the matrix exponential
and the trigonometric functions.
From the definition of the matrix exponential \cite[\S\,11.3]{Golub:1996}:
\begin{align*}
e^{\mat{J}\theta} &= \sum_{k=0}^{\infty} \frac{\mat{J}^k\theta^k}{k!} =
\mat{J}^0 + \mat{J}\theta + \frac{\mat{J}^2\theta^2}{2!} +
                            \frac{\mat{J}^3\theta^3}{3!} + \cdots
%                           \frac{\mat{J}^4\theta^4}{4!} + \cdots
\intertext{Noting that $\mat{J}^0=\mat{I}$ (see \cite[Index Laws]{CollinsDictMaths}),
and separating the series into even and odd terms:}
                  &= \mat{I} - \frac{\mat{I}\theta^2}{2!}
                             + \frac{\mat{I}\theta^4}{4!}
                             - \frac{\mat{I}\theta^6}{6!} + \cdots\\
&\phantom{=} + \mat{J}\theta - \frac{\mat{J}\theta^3}{3!}
                             + \frac{\mat{J}\theta^5}{5!}
                             - \frac{\mat{J}\theta^7}{7!} + \cdots\\
                  &= \mat{I}\cos\theta + \mat{J}\sin\theta
\end{align*}
\qed
\end{proof}
Note that matrix versions of the trigonometric functions are not
needed to compute the matrix exponential, because $\theta$ is a scalar.
In fact, if the exponential is evaluated numerically using a
matrix exponential algorithm or function, the trigonometric
functions are not even explicitly evaluated \cite[\S\,11.3]{Golub:1996}.
In practice, given that this is a special case of the matrix exponential,
(because $\mat{J}^2=-\mat{I}$),
it is likely to be numerically preferable to evaluate the
trigonometric functions and to sum scaled versions of \mat{I} and \mat{J}.

{Notice that the matrix $e^{\mat{J}\theta}$ has a structure
with the cosine of $\theta$ on the diagonal and the (scaled) sine of $\theta$
where there are non-zero elements of \mat{J}.}

\subsection{Matrix form of DFT}
\label{sec:matdft}
The classic discrete Fourier transform of \eqref{eqn:classicdft} may be
generalized to a matrix form in which the signals are vector-valued with
$N$ components each and the root of $-1$ is replaced by an $N\times N$ matrix root
$\mat{J}$ such that {$\mat{J}^2=-\mat{I}$}.
In this form, subject to choosing the correct representation for the matrix
root of $-1$, we may represent a wide variety of complex and hypercomplex
Fourier transforms.
\begin{theorem}\label{theorem:matrixdft}
The following are a discrete Fourier transform pair\footnote{The colon notation used
here will be familiar to users of \matlab (an explanation may be found
in \cite[\S\,1.1.8]{Golub:1996}). Briefly, $\mat{f}[:,m]$ means the
$m\textsuperscript{th}$ column of the matrix \mat{f}.}:
\begin{align}
\label{eqn:forward}
\mat{F}[:,u] &= S\sum_{m=0}^{M-1}\exp\left(          -\mat{J}\,2\pi\frac{mu}{M}\right)\mat{f}[:,m]\\
\label{eqn:inverse}
\mat{f}[:,m] &= T\sum_{u=0}^{M-1}\exp\left(\phantom{-}\mat{J}\,2\pi\frac{mu}{M}\right)\mat{F}[:,u]
\end{align}
where \mat{J} is a $N\times N$ matrix root of $-1$,
\mat{f} and \mat{F} are $N\times M$ matrices with one sample per column,
and the two scale factors $S$ and $T$ multiply to give $1/M$.
\end{theorem}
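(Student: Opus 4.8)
The plan is to verify DFT inversion directly, mirroring the classical scalar proof but with the imaginary unit replaced throughout by the matrix $\mat{J}$. First I would substitute the forward transform \eqref{eqn:forward} into the right-hand side of \eqref{eqn:inverse} and interchange the two finite summations, obtaining
\[
T\sum_{u=0}^{M-1}\exp\!\left(\mat{J}\,2\pi\tfrac{mu}{M}\right)\mat{F}[:,u]
= ST\sum_{m'=0}^{M-1}\left(\sum_{u=0}^{M-1}\exp\!\left(\mat{J}\,2\pi\tfrac{mu}{M}\right)\exp\!\left(-\mat{J}\,2\pi\tfrac{m'u}{M}\right)\right)\mat{f}[:,m'].
\]
Non-commutativity of the underlying matrix algebra is \emph{not} an obstacle here, because every exponential that appears is a function of the \emph{same} matrix $\mat{J}$: by Lemma~\ref{lemma:euler} each is the real linear combination $\mat{I}\cos\phi+\mat{J}\sin\phi$, and a one-line calculation using $\mat{J}^2=-\mat{I}$ together with the angle-addition formulae gives $\exp(\mat{J}\alpha)\exp(\mat{J}\beta)=\exp(\mat{J}(\alpha+\beta))$. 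Hence the inner product of exponentials collapses to $\exp\!\left(\mat{J}\,2\pi u(m-m')/M\right)$.

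The heart of the argument is then the matrix analogue of the orthogonality relation for the DFT kernels. Applying Lemma~\ref{lemma:euler} once more,
\[
\sum_{u=0}^{M-1}\exp\!\left(\mat{J}\,\frac{2\pi u(m-m')}{M}\right)
= \mat{I}\sum_{u=0}^{M-1}\cos\frac{2\pi u(m-m')}{M} \;+\; \mat{J}\sum_{u=0}^{M-1}\sin\frac{2\pi u(m-m')}{M}.
\]
I would then invoke the standard evaluation of these two scalar sums via the geometric series $\sum_{u=0}^{M-1}e^{2\pi\mathrm{i}uk/M}$, whose ratio is a root of unity: when $m\neq m'$ we have $m-m'\not\equiv 0\pmod{M}$, so both the cosine and the sine sums vanish; when $m=m'$ the cosine sum equals $M$ while the sine sum is identically $0$. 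Thus the bracketed quantity is $M\mat{I}\,\delta_{m,m'}$.

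Combining the two displays, the right-hand side of \eqref{eqn:inverse} reduces to $ST\cdot M\,\mat{f}[:,m]$, which equals $\mat{f}[:,m]$ precisely under the stated hypothesis $ST=1/M$; the reverse substitution (forward transform applied to the inverse) is identical after exchanging the roles of $m$ and $u$ and the sign of $\mat{J}$, neither of which affects the sums. I do not expect a genuine obstacle: the only point needing care is that exponentials in a common $\mat{J}$ commute and add in the exponent (immediate from Lemma~\ref{lemma:euler}), and that the sine sum vanishes identically so that no spurious $\mat{J}$ term survives — which is exactly what makes the matrix root of $-1$ behave, in this computation, like the scalar imaginary unit, independently of which complex or hypercomplex algebra $\mat{J}$ happens to represent.
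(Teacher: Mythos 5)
Your proposal is correct and follows essentially the same route as the paper: substitute the forward transform into the inverse, use Lemma~\ref{lemma:euler} to combine exponentials in the common root $\mat{J}$, and reduce the kernel sum to $M\mat{I}\,\delta_{m,m'}$ via the vanishing of the cosine and sine sums over whole periods. The only differences are presentational — you state the orthogonality relation as a single delta identity and justify the exponent addition explicitly via the angle-sum formulae, whereas the paper splits off the $m=\mathcal{M}$ term first and appeals directly to sinusoids summed over an integral number of cycles.
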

\begin{proof}
\newcommand{\m}{\mathcal{M}}
The proof is based on substitution of the forward transform \eqref{eqn:forward}
into the inverse \eqref{eqn:inverse} followed by algebraic reduction to a result
equal to the original signal $\mat{f}$.
We start by substituting \eqref{eqn:forward} into
\eqref{eqn:inverse}, replacing $m$ by $\m$ to keep the two indices distinct{,
and at the same time replacing the two scale factors by their product $1/M$}:
\begin{equation*}
\mat{f}[:,m] = {\frac{1}{M}}\sum_{u=0}^{M-1}\left[e^{\mat{J}2\pi\frac{mu}{M}}
               \sum_{\m=0}^{M-1}e^{-\mat{J}2\pi\frac{\m u}{M}}\mat{f}[:,\m]\right]
\end{equation*}
The exponential of the outer summation can be moved inside the inner,
because it is constant with respect to the summation index $\m$:
\begin{equation*}
\mat{f}[:,m] = \frac{1}{M}\sum_{u=0}^{M-1}\sum_{\m=0}^{M-1}
               e^{\mat{J}\,2\pi\frac{mu}{M}}
               e^{-\mat{J}\,2\pi\frac{\m u}{M}}
               \mat{f}[:,\m]
\end{equation*}
The two exponentials have the same root of $-1$, namely \mat{J},
and therefore they can be combined:
\begin{equation*}
\mat{f}[:,m] = \frac{1}{M}\sum_{u=0}^{M-1}\sum_{\m=0}^{M-1}
               e^{\mat{J}\,2\pi\frac{(m-\m)u}{M}}
               \mat{f}[:,\m]
\end{equation*}
We now isolate out from the inner summation the case where $m=\m$.
In this case the exponential reduces to an identity matrix, and we have:
\begin{align*}
\mat{f}[:,m] &= \frac{1}{M}\sum_{u=0}^{M-1}\mat{f}[:,m]\\
             &+ \frac{1}{M}\sum_{u=0}^{M-1}\left[\left.
                \sum_{\m=0}^{M-1}\right|_{\m\ne m}
                e^{\mat{J}\,2\pi\frac{(m-\m)u}{M}}
                \mat{f}[:,\m]\right]
\end{align*}
The first line on the right sums to $\mat{f}[:,m]$, which is the original
signal, as required.
To complete the proof, 
we have to show that the second line on the right reduces to zero.
Taking the second line alone,
and changing the order of summation, we obtain:
\[
\left.\sum_{\m=0}^{M-1}\right|_{\m\ne m}
\left[\sum_{u=0}^{M-1}e^{\mat{J}\,2\pi\frac{(m-\m)u}{M}}\right]\mat{f}[:,\m]
\]
Using Lemma~\ref{lemma:euler} we now write the matrix exponential
as the sum of a cosine and sine term.
\begin{equation*}
\left.\sum_{\m=0}^{M-1}\right|_{m\ne\m}
\left[
\begin{aligned}
 \mat{I}&\sum_{u=0}^{M-1}\cos\left(\!2\pi\frac{(m-\m)u}{M}\right)\\
+\mat{J}&\sum_{u=0}^{M-1}\sin\left(\!2\pi\frac{(m-\m)u}{M}\right)
\end{aligned}
\right]\mat{f}[:,\m]
\end{equation*}
Since both of the inner summations are sinusoids summed over an
integral number of cyles, they vanish, and this completes the
proof.
\qed
\end{proof}
Notice that the requirement for $\mat{J}^2=-\mat{I}$ is the only
constraint on \mat{J}.
It is not necessary to constrain elements of \mat{J} to be real. 
Note that $\mat{J}^2=\mat{-I}$ implies that $\mat{J}^{-1}=-\mat{J}${,}
hence the inverse transform is obtained by negating or inverting the matrix
root of $-1$ (the two operations are equivalent).

The matrix dimensions must be consistent according to the ordering inside the summation.
As written above, for a complex transform represented in matrix
form,
\mat{f} and \mat{F} must have two rows and $M$ columns.
If the exponential were to be placed on the right,
\mat{f} and \mat{F} would have to be transposed,
with two columns and $M$ rows.

It is important to realize that \eqref{eqn:forward} is totally
different to the classical matrix formulation of the discrete Fourier
transform,
as given for example by Golub and Van Loan \cite[\S\,4.6.4]{Golub:1996}.
The classic DFT given in \eqref{eqn:classicdft} can be formulated as a matrix
equation in which a large $M\times M$  {Vandermonde} matrix containing
$n\textsuperscript{th}$ roots of unity
multiplies the signal $f$ expressed as a vector of real or complex values.
Instead,
in \eqref{eqn:forward} each matrix exponential multiplies
\emph{one column} of \mat{f},
corresponding to \emph{one sample} of the signal represented by \mat{f}
and the dimensions of the matrix exponential are set by the dimensionality
of the algebra (2 for complex, 4 for quaternions \textit{etc.}).
In \eqref{eqn:forward} it is the multiplication of the exponential and
the signal samples, dependent on the algebra involved,
that is expressed in matrix form,
not the structure of the transform itself.

Readers who are already familiar with hypercomplex Fourier transforms should note that
the ordering of the exponential within the summation \eqref{eqn:forward}
is not related to the ordering within the hypercomplex formulation of the transform
(which is significant because of non-commutative multiplication).
The hypercomplex ordering can be accommodated within the framework presented
here by changing the representation of the matrix root of $-1$,
in a non-trivial way, shown for the quaternion case by
Ickes \cite[Equation 10]{Ickes:1970} and called \emph{transmutation}.
We have studied the generalisation of Ickes' transmutation to the case
of Clifford algebras, and it appears {that there is a more general}
operation. {In the cases we have studied this can be}
described as negation of the off-diagonal elements of the
lower-right sub-matrix, excluding the first row and column\footnote{This
gives the same result as transmutation in the quaternion case.}.
We believe a more general result is known in Clifford algebra
but we have not been able to locate a clear statement that we could cite.
We therefore leave this for later work, as a full generalisation to Clifford algebras of
arbitrary dimension requires further work, and is more appropriate to
a more mathematical paper.

\section{Examples in specific algebras}
\label{sec:examples}
In this section we present the information necessary for \eqref{eqn:forward}
and \eqref{eqn:inverse} to be verified numerically.
In each of the cases below, we present an example root of $-1$ and
a matrix representation{\footnote{{The matrix representations of
roots of -1 are not unique -- a transpose of the matrix, for example, is
equally valid. The operations that leave the square of the matrix invariant
probably correspond to fundamental operations in the hypercomplex algebra, for
example negation, conjugation, reversion.}}}.
We include in the Appendix a short \matlab function for computing
the transform in \eqref{eqn:forward}.
The same code will compute the inverse by negating \mat{J}.
This may be used to verify the results in the next section and to
compare the results obtained with the classic complex FFT.
In order to verify the quaternion or biquaternion results,
the reader will need to install the QTFM toolbox \cite{qtfm}{,
or use some other specialised software for computing with quaternions.}

\subsection{Complex algebra}
\label{sec:complex}
The $2\times2$ real matrix
$\left(
\begin{smallmatrix}
0 &           -1\\
1 & \phantom{-}0
\end{smallmatrix}
\right)$
can be easily verified by eye to be a square root of the negated identity matrix
$\left(\begin{smallmatrix}-1 & \phantom{-} 0\\ \phantom{-} 0 &-1\end{smallmatrix}\right)$,
and it is easy to verify numerically
that Euler's formula gives the same numerical results in the classic complex
case and in the matrix case for an arbitrary $\theta$.
This root of $-1$ is based on the well-known isomorphism between a complex number
$a+j b$ and the matrix representation
$\left(\begin{smallmatrix}a & -b\\b & \phantom{-}a\end{smallmatrix}\right)$
\cite[Theorem 1.6]{Ward:1997}\footnote{We have used the transpose of Ward's
representation for consistency with the quaternion and biquaternion representations
in the two following sections.}.
The structure of a matrix exponential $e^{\mat{J}\theta}$ using the above matrix
for \mat{J} is
$\left(
\begin{smallmatrix}
C &           -S\\
S & \phantom{-}C
\end{smallmatrix}
\right)$ where $C=\cos\theta$ and $S=\sin\theta$.

\subsection{Quaternion algebra}
\label{sec:quaternion}
The quaternion roots of $-1$ were discovered by Hamilton
\cite[pp\,203, 209]{Hamiltonpapers:V3:7}, and consist of all
unit pure quaternions, that is quaternions of the form $x\i+y\j+z\k$
subject to the constraint $x^2+y^2+z^2=1$.
A simple example is the quaternion $\boldsymbol\mu=(\i+\j+\k)/\sqrt{3}$,
which can be verified by hand to be a square root of $-1$ using the usual
rules for multiplying the quaternion basis elements ($\i^2=\j^2=\k^2=\i\j\k=-1$).
Using the isomorphism with $4\times4$ matrices given by Ward \cite[\S\,2.8]{Ward:1997},
between the quaternion
$w+x\i+y\j+z\k$ and the matrix:
\begin{align*}
&
 \begin{pmatrix}
 w &           -x &           -y &           -z\\
 x & \phantom{-}w &           -z & \phantom{-}y\\
 y & \phantom{-}z & \phantom{-}w &           -x\\
 z &           -y & \phantom{-}x & \phantom{-}w
 \end{pmatrix}
\intertext{we have the following matrix representation:}
\boldsymbol\mu = \frac{1}{\sqrt{3}}
&
 \begin{pmatrix}
 \newcommand{\s}{{\color{white}+}}
 0 &           -1 &           -1 &           -1\\
 1 & \phantom{-}0 &           -1 & \phantom{-}1\\
 1 & \phantom{-}1 & \phantom{-}0 &           -1\\
 1 &           -1 & \phantom{-}1 & \phantom{-}0
 \end{pmatrix}
\end{align*}
Notice the structure that is apparent in this matrix: the
$2\times2$ blocks on the leading diagonal {at the top left and bottom right}
can be recognised as roots of $-1$ in the complex algebra as shown in \S\,\ref{sec:complex}
\begin{proposition}
\label{prop:quatroot}
Any matrix of the form:
\[
\begin{pmatrix}
 0 &           -x &           -y &           -z\\
 x & \phantom{-}0 &           -z & \phantom{-}y\\
 y & \phantom{-}z & \phantom{-}0 &           -x\\
 z &           -y & \phantom{-}x & \phantom{-}0
\end{pmatrix}
\]
with $x^2+y^2+z^2=1$ is the square root of a negated $4\times4$ identity matrix.
Thus the matrix representations of the quaternion roots of $-1$ are all
roots of the negated $4\times4$ identity matrix.
\end{proposition}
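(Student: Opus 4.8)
The plan is to recognise the displayed matrix as the image, under Ward's quaternion-to-matrix isomorphism, of the pure quaternion $q = x\i + y\j + z\k$, and then to transport the identity $q^2 = -1$ (valid precisely when $x^2+y^2+z^2=1$) across that isomorphism onto the matrix side. This keeps the argument short and makes plain \emph{why} the sign pattern in the matrix is the one that works.

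First I would note that putting $w=0$ in Ward's $4\times4$ matrix representation displayed above gives exactly the matrix $\mat{M}$ in the statement, so $\mat{M} = \Phi(q)$, where $\Phi$ denotes Ward's representation map and $q = x\i+y\j+z\k$. Because $\Phi$ is a unital ring homomorphism \cite[\S\,2.8]{Ward:1997}---the pattern of the entries being arranged so that matrix multiplication reproduces quaternion multiplication---we have $\mat{M}^2 = \Phi(q)^2 = \Phi(q^2)$ and $\Phi(-1) = -\mat{I}$, where $-1$ here is the real quaternion $-1+0\i+0\j+0\k$ and $\mat{I}$ the $4\times4$ identity. It then remains only to compute $q^2$ inside the quaternions: expanding $(x\i+y\j+z\k)^2$ and using $\i^2=\j^2=\k^2=-1$ together with $\i\j=-\j\i$, $\j\k=-\k\j$, $\k\i=-\i\k$, the six mixed terms cancel in pairs, leaving $q^2 = -(x^2+y^2+z^2)$. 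Under the hypothesis $x^2+y^2+z^2=1$ this is $-1$, so $\mat{M}^2 = \Phi(-1) = -\mat{I}$, which is the first assertion. The second assertion is then immediate: by Hamilton's characterisation cited above, every quaternion root of $-1$ is a unit pure quaternion $x\i+y\j+z\k$, hence its matrix representation is a matrix of exactly the stated form, and we have just shown every such matrix squares to $-\mat{I}$.

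I do not expect a genuine obstacle here. The only step that deserves care is the appeal to $\Phi$ being multiplicative, but that is precisely the content of Ward's theorem and need not be reproved. A reader wanting a self-contained argument may instead verify $\mat{M}^2 = -\mat{I}$ by direct multiplication: each diagonal entry collapses to $-(x^2+y^2+z^2)$, and every off-diagonal entry vanishes because of the sign pattern of $\mat{M}$---this cancellation being exactly the matrix shadow of quaternion anticommutativity---but that route is purely mechanical and yields less insight than the homomorphism argument.
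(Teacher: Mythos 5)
Your proof is correct, but it takes a genuinely different route from the paper. The paper proves the proposition by direct computation on the matrix itself: it observes that the matrix is anti-symmetric, that the inner product of the $i$\textsuperscript{th} row with the $i$\textsuperscript{th} column is $-x^2-y^2-z^2=-1$, and that for $i\ne j$ the row--column product is a sum of two terms of equal magnitude and opposite sign (a consequence of the sign pattern: one or three minus signs per row, zero or two per column), so the off-diagonal entries of the square vanish. You instead recognise the matrix as $\Phi(x\i+y\j+z\k)$ under Ward's representation, invoke the multiplicativity of $\Phi$, and compute $(x\i+y\j+z\k)^2=-(x^2+y^2+z^2)$ inside the quaternions. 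Both arguments are sound. Yours is shorter and explains \emph{why} the result must hold --- and it would transfer unchanged to any other faithful matrix representation of $\mathbb{H}$ --- but it outsources the real work to the unproved (though standard and cited) fact that $\Phi$ is a ring homomorphism, which is itself essentially the same $4\times4$ entry-wise verification. The paper's computation is self-contained, requires no appeal to Ward beyond the layout of the matrix, and exposes the structural features (anti-symmetry, parity of the sign counts) that the authors later reuse when they remark that the \clifford{1}{1} and \clifford{2}{0} matrices have the same character and that ``minor modifications to Proposition~\ref{prop:quatroot} would cover these algebras''; your homomorphism argument, being tied to the quaternion product, does not generalise to those cases as directly. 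You do gesture at the direct computation in your closing remarks, so nothing is missing --- only the emphasis differs.
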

\begin{proof}
The matrix is anti-symmetric, and the inner product of the $i\textsuperscript{th}$
row and $i\textsuperscript{th}$ column is $-x^2-y^2-z^2$, which is $-1$
because of the stated constraint.
Therefore the diagonal elements of the square of the matrix are $-1$.
Note that the rows of the matrix have one or three negative values,
whereas the columns have zero or two.
The product of the $i\textsuperscript{th}$ row with the $j\textsuperscript{th}$
column, $i\ne j$, is the sum of two values of opposite sign and equal magnitude.
Therefore all off-diagonal elements of the square of the matrix are zero.
\qed
\end{proof}
The structure of a matrix exponential $e^{\mat{J}\theta}$ using a matrix as in
Proposition \ref{prop:quatroot} for \mat{J} is: 
\[
\begin{pmatrix}
\phantom{x}C &           -xS &           -yS &           -zS\\
          xS & \phantom{-x}C &           -zS & \phantom{-}yS\\
          yS & \phantom{-}zS & \phantom{-x}C &           -xS\\
          zS &           -yS & \phantom{-}xS & \phantom{-x}C
\end{pmatrix}
\]
where, as before, $C=\cos\theta$ and $S=\sin\theta$.
\subsection{Biquaternion algebra}
The biquaternion algebra \cite[Chapter 3]{Ward:1997}
(quaternions with complex elements)
can be handled exactly as in the previous section, except
that the $4\times4$ matrix representing the root of $-1$ must
be complex (and the signal matrix must have {four}
complex elements {per column}).
The set of square roots of $-1$ in the biquaternion algebra
is given in \cite{10.1007/s00006-006-0005-8}.
A simple example is $\i+\j+\k+\I(\j-\k)$ (where \I denotes the
classical complex root of $-1$, that is the biquaternion has
real part $\i+\j+\k$ and imaginary part $\j-\k$).
Again, this can be verified by hand to be a root of $-1$
and its matrix representation is:
\[
\begin{pmatrix}
0     &           -1     & -1 - \I &           -1 + \I\\
1     & \phantom{-}0     & -1 + \I & \phantom{-}1 + \I\\
1 +\I & \phantom{-}1 -\I &  0     &           -1\\          
1 -\I &           -1 -\I &  1     & \phantom{-}0          
\end{pmatrix}
\]
Again, sub-blocks of the matrix have recognizable structure.
The {upper left and lower right} diagonal $2\times2$ blocks are roots of
$-1$, while the {lower left and upper right} off-diagonal {$2\times2$}
blocks are nilpotent -- that is their square vanishes.

\subsection{Clifford algebras}
Recent work by Hitzer and Ab{\l}amowicz \cite{10.1007/s00006-010-0240-x}
has explored the roots of $-1$ in Clifford algebras \clifford{p}{q} up to those
with $p+q = 4$, which are 16-dimensional algebras\footnote{$p$ and $q$ are
non-negative integers such that $p+q=n$ and $n\ge1$. The dimension of the
algebra (strictly the dimension of the space spanned by the basis
elements of the algebra) is $2^n$ .}.
The derivations of
the roots of -1 for the 16-dimensional algebras are long and difficult.
Therefore, for the moment, we confine the discussion here to lower-order
algebras, noting that, since all Clifford algebras are isomorphic to a
matrix algebra, we can be assured that if roots of -1 exist, they must
have a matrix representation.
Using {the results obtained by Hitzer and Ab{\l}amowicz},
and by finding from first principles
the layout of a real matrix isomorphic to a Clifford
multivector in a given algebra, it has been possible to
verify that the transform formulation presented in this
paper is applicable to at least the lower order Clifford
algebras.
The quaternions and biquaternions are isomorphic to the
Clifford algebras \clifford{0}{2} and \clifford{3}{0}
respectively so this is not surprising. Nevertheless, it is
an important finding, because until now quaternion and
Clifford Fourier transforms were defined in different
ways, using different terminology, and it was difficult
to make comparisons between the two.
Now, with the matrix exponential formulation,
it is possible to handle quaternion and Clifford transforms
(and indeed transforms in different Clifford algebras) within
the same algebraic and/or numerical framework.

We present examples here from two of the 4-dimensional
Clifford algebras, namely \clifford{1}{1} and \clifford{2}{0}.
These results have been verified against the \clical
package \cite{CLICAL-User-manual} to ensure that the
multiplication rules have been followed correctly and
that the roots of $-1$ found by Hitzer and Ab{\l}amowicz
are correct.

Following the notation in \cite{10.1007/s00006-010-0240-x},
we write a multivector in \clifford{1}{1} as
$\alpha + b_1 e_1 + b_2 e_2 + \beta e_{12}$,
where $e_1^2=+1, e_2^2=-1, e_{12}^2=+1$ and $e_1 e_2 = e_{12}$. 
A possible real matrix representation is as follows:
\[
\begin{pmatrix}
\alpha & \phantom{-}b_1    &              -b_2 & \beta\\
b_1    & \phantom{-}\alpha &            -\beta & b_2\\
b_2    &            -\beta & \phantom{-}\alpha & b_1\\          
\beta  &              -b_2 & \phantom{-}b_1    & \alpha          
\end{pmatrix}
\]
In this algebra, the constraints on the coefficients
of a multivector for it to be a root of $-1$ are as
follows: $\alpha=0$ and $b_1^2-b_2^2+\beta^2=-1$
\cite[Table 1]{10.1007/s00006-010-0240-x}\footnote{We have
re-arranged the constraint compared to \cite[Table 1]{10.1007/s00006-010-0240-x}
to make the comparison with the quaternion case easier:
we see that the signs of the squares of the coefficients
match the signs of the squared basis elements.}.
Choosing $b_1=\beta=1$ gives $b_2=\sqrt{3}$ and thus
$e_1 + \sqrt{3}e_2 + e_{12}$ which can be verified
algebraically or in \clical to be a root of $-1$.
The corresponding matrix is then:
\[
\begin{pmatrix}
0        & \phantom{-}1 &    -\sqrt{3} & 1\\
1        & \phantom{-}0 &           -1 & \sqrt{3}\\
\sqrt{3} &           -1 & \phantom{-}0 & 1\\          
1        &    -\sqrt{3} & \phantom{-}1 & 0          
\end{pmatrix}
\]
Following the same notation in algebra \clifford{2}{0},
in which $e_1^2=e_2^2=+1, e_{12}^2=-1$, a possible
matrix representation is:
\[
\begin{pmatrix}
\alpha & \phantom{-}b_1    &    b_2 & -\beta\\
b_1    & \phantom{-}\alpha &  \beta & -b_2\\
b_2    &            -\beta & \alpha & \phantom{-}b_1\\          
\beta  &              -b_2 &   b_1  & \phantom{-}\alpha          
\end{pmatrix}
\]
The constraints on the coefficients are $\alpha=0$ and
$b_1^2+b_2^2-\beta^2=-1$,
and choosing $b_1=b_2=1$ gives $\beta=\sqrt{3}$ and thus
$e_1 + e_2 + \sqrt{3}e_{12}$ is a root of $-1$.
The corresponding matrix is then:
\[
\begin{pmatrix}
0        & \phantom{-}1  &       1  &     -\sqrt{3}\\
1        & \phantom{-}0  & \sqrt{3} &           -1\\
1        &     -\sqrt{3} &       0  & \phantom{-}1\\          
\sqrt{3} &           -1  &       1  & \phantom{-}0          
\end{pmatrix}
\]
Notice that in both of these algebras the matrix representation
of a root of $-1$ is very similar to that given for the quaternion
case in Proposition~\ref{prop:quatroot}, with zeros on the
leading diagonal, an odd number of negative values in each
row and an even number in each column.
It is therefore simple to see that minor modifications to
Proposition~\ref{prop:quatroot} would cover these algebras
and the matrices presented above.

\section{An example not based on a specific\\algebra}
\label{sec:mystery}
We show here using an arbitrary $2\times2$ matrix root of $-1$,
that it is possible to define a Fourier transf{or}m
{without a specific} algebra.
Let an arbitrary real matrix be given as
$J = \left(\begin{smallmatrix}a & b\\c & d\end{smallmatrix}\right)$,
then by brute force expansion of $J^2=-{I}$ we find
the original four equations reduce to but two independent equations.
Picking $(a,b)$ and solving for the remaining coefficients we find that
any matrix of the form:
\begin{equation*}
\begin{pmatrix}a & \phantom{-} b\\ -(1+a^2)/b &-a\end{pmatrix}
\end{equation*}
with finite $a$ and $b$, and $b\ne0$, is a root of $-1$. Choosing instead $(a,c)$ we get the transpose form:
\begin{equation*}
\begin{pmatrix}a & -(1+a^2)/c \\ c &-a\end{pmatrix}
\end{equation*}
where $c\ne0$.
Choosing the cross-diagonal terms $(b,c)$ yields: 
\begin{equation}
\label{eqn:ellipseroot}
{
\begin{pmatrix}
\pm\kappa &         b\\
       c  & \mp\kappa
\end{pmatrix}
}
\end{equation}
{where $\kappa=\sqrt{ -1 - bc}$ and} $bc\leq-1$. 

In all cases the resulting matrix has eigenvalues of $\lambda  = \pm i$.
(This is a direct consequence of the fact that this matrix squares to $-1$.)
Each form, however, has different eigenvectors.
The standard matrix {representation} for the complex operator {$i$ is}
{$\left(\begin{smallmatrix}0 & -1\\1 &\phantom{-}0\end{smallmatrix}\right)$}
{with}
eigenvectors $v = [1,\pm\,i]$.
In the matrix with $(a,b)$ parameters the eigenvectors are $v = [1,-b/(a\,\pm\,i)]$ 
whereas the cross-diagonal form with $(b,c)$ parameters has eigenvectors
{$v = [1,(\kappa\,\pm\,i)/c]$}.

These forms suggest the interesting question:
which algebra, if any, applies here\footnote{It is possible that
there is no corresponding `algebra' in the usual sense.
Note that there are only two Clifford algebras of dimension 2,
one of which is the algebra of complex numbers.
The other has no multivector roots of -1 \cite[\S\,4]{10.1007/s00006-010-0240-x}
and therefore the roots of $-1$ given above cannot be a root of $-1$ in any
Clifford algebra.}; and how can the Fourier coefficients (the `spectrum') be interpreted? 
{We are not able to answer the first question in this paper.
The `interpretation' of the spectrum is relatively simple.
Consider a spectrum $\mat{F}$ containing only one non-zero column at index $u_0$
with value $\left(\begin{smallmatrix}x\\y\end{smallmatrix}\right)$ and invert this
spectrum using \eqref{eqn:inverse}.
Ignoring the scale factor, the result will be the signal:
\[
\mat{f}[:,m] = \exp\left(\mat{J}\,2\pi\frac{mu_0}{M}\right)\begin{pmatrix}x\\y\end{pmatrix}
\]
The form of the matrix exponential depends on \mat{J}.
In the classic complex case, as given in \S\,\ref{sec:complex},
the matrix exponential, as already seen, takes the form:
\[
\begin{pmatrix}
\cos\theta &          - \sin\theta\\
\sin\theta & \phantom{-}\cos\theta
\end{pmatrix}
\]
where $\theta=2\pi\frac{mu_0}{M}$.
}
This is a rotation matrix and it
maps a real unit vector $\left(\begin{smallmatrix}1\\0\end{smallmatrix}\right)$
to a point on a circle in the complex plane.
It embodies the standard \emph{phasor} concept associated with sinusoidal functions.
Using the same analysis, this time using the matrix in \eqref{eqn:ellipseroot} above,
one obtains for the matrix exponential the `phasor' operator:
\[
\left(
\begin{array}{@{}rr@{}} % The @{} closes up the brackets, otherwise there is too much space.
 \cos\theta + \kappa\sin\theta  &                   b\sin\theta\\
                    c\sin\theta & \cos\theta - \kappa\sin\theta
\end{array}
\right)
\]
Instead of mapping a real unit vector
$\left(\begin{smallmatrix}1\\0\end{smallmatrix}\right)$
to a {point on a circle},
this matrix maps to an ellipse.
Thus, we see that a transform based on a matrix such as that
in \eqref{eqn:ellipseroot} has basis functions that are projections of an
elliptical, rather than a circular path in the complex plane, as in the
classical complex Fourier transform.
We refer the reader to a discussion on a similar point for the one-sided quaternion
discrete Fourier transform in our own 2007 paper \cite[\S\,VI]{10.1109/TIP.2006.884955},
in which we showed that the quaternion coefficients of the Fourier spectrum
also represent elliptical paths through the space of the signal samples.

{It is possible that the matrices discussed in this section could be
transformed by similarity transformations into matrices representing elements
of a Clifford algebra\footnote{{We are grateful to Dr Eckhard Hitzer for pointing
this out, in September 2010.}}.
Note that in the quaternion case, any root of -1
lies on the unit sphere in 3-space,
and can therefore be transformed into another root of -1 by a rotation.
It is possible that the same applies in other algebras,
the transformation needed being dependent on the geometry.}
Clearly there are interesting issues to be studied here,
and further work to be done.

\section{Non-existence of transforms in algebras with odd dimension}
\label{sec:nonexist}
In this section we show that there are no real matrix roots of $-1$
with odd dimension.
This is not unexpected, since the existence of such roots would {suggest} %require
the existence of a hypercomplex algebra of odd dimension.
The significance of this result is to show that there is no
discrete Fourier transform
as formulated in Theorem \ref{theorem:matrixdft}
for an algebra of dimension $3$, which is
of importance for the processing of signals representing physical 3-space
quantities, or the values of colour image pixels.
We thus conclude that the choice of quaternion Fourier transforms or
a Clifford Fourier transform of dimension $4$ is inevitable in these
cases.
This is not an unexpected conclusion, nevertheless,
in the experience of the authors, some researchers in signal and image
processing hesitate to accept the idea of using four dimensions to
handle three-dimensional samples or pixels. (This is despite the rather
obvious parallel of needing two dimensions -- complex numbers -- to
represent the Fourier coefficients of a real-valued signal or image.)
\begin{theorem}
There are no $N\times N$ matrices $\mat{J}$ with real elements such that
$\mat{J}^2=-\mat{I}$ for odd values of $N$.
\end{theorem}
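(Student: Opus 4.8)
The plan is to argue by contradiction using the multiplicativity of the determinant, which reduces the whole question to a one-line numerical impossibility. Suppose such a matrix \mat{J} exists, so that $\mat{J}^2=-\mat{I}$ where \mat{I} is the $N\times N$ identity. Taking determinants of both sides and using $\det(\mat{J}^2)=(\det\mat{J})^2$ gives $(\det\mat{J})^2 = \det(-\mat{I}) = (-1)^N$. Since \mat{J} has real entries, $\det\mat{J}$ is a real number, so the left-hand side is non-negative; but for odd $N$ the right-hand side equals $-1$. This contradiction shows no such \mat{J} can exist.

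I would present exactly this, spelling out the two facts invoked — that the determinant is multiplicative, so $\det(\mat{J}\mat{J}) = \det\mat{J}\,\det\mat{J}$, and that $\det(c\mat{I}) = c^N$ for a scalar $c$, here with $c=-1$. It is worth noting explicitly that the argument uses the hypothesis ``real'' only through the claim that $(\det\mat{J})^2 \ge 0$; over the complex numbers the obstruction disappears, consistent with the remark earlier in the paper that the entries of \mat{J} need not be real in the biquaternion case (there $N=4$ is even anyway, but the dimension-$2$ complex-entry matrices of \S\,\ref{sec:complex} already illustrate that complex roots of $-\mat{I}$ are unproblematic).

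An alternative route, which I would mention but not develop, is the eigenvalue argument: a real matrix of odd size has a characteristic polynomial of odd degree with real coefficients, hence at least one real eigenvalue $\lambda$; but any eigenvalue of \mat{J} satisfies $\lambda^2=-1$, forcing $\lambda=\pm i$, which is not real. This is essentially the same fact viewed through the reality of a polynomial root rather than the reality of a determinant, and it connects naturally to the eigenvalue discussion in \S\,\ref{sec:mystery}.

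There is no real obstacle here — the result is elementary — so the only ``hard part'' is purely expository: making clear that the odd dimension is precisely what makes $(-1)^N$ negative, and hence that the statement is sharp, since even-dimensional real roots of $-\mat{I}$ plainly do exist (the $2\times2$ matrix of \S\,\ref{sec:complex} being the prototype, with block-diagonal copies giving all even $N$).
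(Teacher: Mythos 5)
Your determinant argument is exactly the paper's own proof: both compute $\det(-\mat{I})=(-1)^N=-1$ for odd $N$ and note that $(\det\mat{J})^2=-1$ is impossible for a real matrix. The proposal is correct and takes essentially the same approach.
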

\begin{proof}
The determinant of a diagonal matrix is the product of its diagonal entries.
Therefore $|-\mat{I}|=-1$ for odd $N$.
Since the product of two determinants is the determinant of the product,
$|\mat{J}^2|=-1$ requires $|\mat{J}|^2=-1$, which cannot
be satisfied if $\mat{J}$ has real elements.
\qed
\end{proof}

\section{Extension to two-sided DFTs}
\label{twodimdft}
There have been various definitions of two sided hypercomplex
Fourier transforms and DFTs.
We consider here only one case
to demonstrate that the approach presented in this
paper is applicable to two-sided as well as one-sided transforms: this is
a matrix exponential Fourier transform based on
Ell's original two-sided two-dimensional
quaternion transform
\cite[Theorem 4.1]{Ell:thesis}, \cite{Ell:1993}, \cite{10.1049/el:19961331}.
A more general formulation is:
\begin{equation}
\mat{F}[u, v] = S\!\sum_{m=0}^{M-1}\sum_{n=0}^{N-1}\!
e^{-\mat{J} 2\pi\frac{mu}{M}}\mat{f}[m,n]e^{-\mat{K} 2\pi\frac{nv}{N}}
\label{eqn:twodforward}
\end{equation}
\begin{equation}
\mat{f}[m, n] = T\!\sum_{u=0}^{M-1}\sum_{v=0}^{N-1}\!
e^{+\mat{J} 2\pi\frac{mu}{M}}\mat{F}[u,v]e^{+\mat{K} 2\pi\frac{nv}{N}}
\label{eqn:twodinverse}
\end{equation}
in which \emph{each element} of the two-dimensional arrays \mat{F}
and \mat{f} is a {square} matrix representing a
complex or hypercomplex number using
a matrix isomorphism for the algebra in use,
for example the representations already given in \S\,\ref{sec:quaternion}
in the case of the quaternion algebra;
the two scale factors multiply to give $1/MN$,
and \mat{J} and \mat{K} are matrix
representations of two \emph{arbitrary}
roots of $-1$ in the chosen algebra.
(In Ell's original formulation, the roots of $-1$
were $\j$ and $\k$, that is two of the \emph{orthogonal} quaternion basis
elements. The following theorem shows that there is no requirement for
the two roots to be orthogonal in order for the transform to invert.)
\begin{theorem}
\label{theorem:twodmatrixdft}
The transforms in \eqref{eqn:twodforward} and \eqref{eqn:twodinverse}
are a two-dimensional discrete Fourier transform pair,
{provided that $\mat{J}^2 = \mat{K}^2 = -\mat{I}$.}
\end{theorem}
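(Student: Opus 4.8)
The plan is to follow exactly the strategy used for Theorem~\ref{theorem:matrixdft}: substitute the forward transform \eqref{eqn:twodforward} into the inverse \eqref{eqn:twodinverse} and reduce the result algebraically to $\mat{f}[m,n]$. First I would substitute, renaming the summation indices in \eqref{eqn:twodforward} to $\mathcal{M}$ and $\mathcal{N}$ to keep them distinct from $m$ and $n$, and at the same time collapse the two scale factors into their product $1/MN$. This expresses $\mat{f}[m,n]$ as a quadruple sum over $u,v,\mathcal{M},\mathcal{N}$ of the sandwiched product $e^{\mat{J}2\pi mu/M}\,e^{-\mat{J}2\pi\mathcal{M}u/M}\,\mat{f}[\mathcal{M},\mathcal{N}]\,e^{-\mat{K}2\pi\mathcal{N}v/N}\,e^{\mat{K}2\pi nv/N}$, where the two ``outer'' exponentials have been moved inside the inner sums since $e^{\mat{J}2\pi mu/M}$ and $e^{\mat{K}2\pi nv/N}$ are both constant with respect to the inner indices $\mathcal{M}$ and $\mathcal{N}$.

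The second step is to combine the two $\mat{J}$-exponentials lying to the left of $\mat{f}[\mathcal{M},\mathcal{N}]$ into $e^{\mat{J}2\pi(m-\mathcal{M})u/M}$ and the two $\mat{K}$-exponentials lying to the right into $e^{\mat{K}2\pi(n-\mathcal{N})v/N}$, exactly as in the one-dimensional proof, since in each case the two exponentials share a root of $-1$ and therefore commute and add arguments. Then, because the left factor does not depend on $v$ and the right factor does not depend on $u$, the sum over $u$ may be carried onto the left factor alone and the sum over $v$ onto the right factor alone, giving
\[
\mat{f}[m,n] = \frac{1}{MN}\sum_{\mathcal{M}=0}^{M-1}\sum_{\mathcal{N}=0}^{N-1}
\left(\sum_{u=0}^{M-1}e^{\mat{J}2\pi\frac{(m-\mathcal{M})u}{M}}\right)
\mat{f}[\mathcal{M},\mathcal{N}]
\left(\sum_{v=0}^{N-1}e^{\mat{K}2\pi\frac{(n-\mathcal{N})v}{N}}\right).
\]

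The third step applies Lemma~\ref{lemma:euler} to each bracketed sum and splits off the diagonal case as in Theorem~\ref{theorem:matrixdft}: when $m=\mathcal{M}$ the left bracket equals $M\mat{I}$, while when $m\ne\mathcal{M}$ the cosine and sine sums are over a whole number of cycles and vanish, so the left bracket equals $M\,\delta_{m\mathcal{M}}\mat{I}$; identically the right bracket equals $N\,\delta_{n\mathcal{N}}\mat{I}$. Substituting these and using that $\mat{I}$ is the multiplicative identity leaves $\mat{f}[m,n]=\frac{1}{MN}\cdot M\cdot N\cdot\mat{f}[m,n]=\mat{f}[m,n]$, as required.

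The step I expect to need the most care is the second one, namely justifying these manipulations in a non-commutative setting. The point to make explicit is that we never require $\mat{J}$ and $\mat{K}$ to commute with one another — which is precisely why the theorem imposes no relation between the two roots, orthogonal or otherwise; we only ever combine exponentials that share a root, and those exponentials stay on a fixed side of $\mat{f}[\mathcal{M},\mathcal{N}]$ throughout, so associativity of matrix multiplication suffices. The separation of the quadruple sum into a product of two single sums then rests only on distributivity together with the independence of the left factor from $v$ and of the right factor from $u$.
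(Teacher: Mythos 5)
Your proposal is correct and follows the same overall strategy as the paper's proof (substitute \eqref{eqn:twodforward} into \eqref{eqn:twodinverse}, merge exponentials sharing a root of $-1$, and use the vanishing of sinusoids summed over whole cycles via Lemma~\ref{lemma:euler}). Where you genuinely differ is in the final reduction. The paper first isolates the single case $\mathcal{M}=m$ \emph{and} $\mathcal{N}=n$, and then argues that everything else vanishes because the $v$-summation of $e^{\mat{K}2\pi(n-\mathcal{N})v/N}$ is over an integral number of cycles; as written, that argument only covers the terms with $\mathcal{N}\ne n$, and the remaining terms (where $\mathcal{N}=n$ but $\mathcal{M}\ne m$) need the parenthetical remark about doing ``similarly with the exponential on the right and the first summation'' to be carried out. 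Your factorization
\[
\sum_{u=0}^{M-1}\sum_{v=0}^{N-1} A_u\,\mat{f}[\mathcal{M},\mathcal{N}]\,C_v
=\Bigl(\sum_{u=0}^{M-1}A_u\Bigr)\mat{f}[\mathcal{M},\mathcal{N}]\Bigl(\sum_{v=0}^{N-1}C_v\Bigr)
= M\,\delta_{m\mathcal{M}}\,N\,\delta_{n\mathcal{N}}\,\mat{f}[\mathcal{M},\mathcal{N}]
\]
handles all four cases ($\mathcal{M}=m$ or not, $\mathcal{N}=n$ or not) uniformly and makes the case analysis unnecessary, at the cost of having to justify pulling the two single sums apart --- which, as you correctly note, needs only distributivity and the independence of the left factor from $v$ and of the right factor from $u$, not any commutation between $\mat{J}$ and $\mat{K}$. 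Your explicit remark that the $\mat{J}$-exponentials and $\mat{K}$-exponentials each stay on a fixed side of $\mat{f}[\mathcal{M},\mathcal{N}]$ is exactly the point that makes the proof work in a non-commutative algebra, and it is also the reason the theorem needs no orthogonality assumption on the two roots.
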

\begin{proof}
\newcommand{\m}{\mathcal{M}}
\newcommand{\n}{\mathcal{N}}
The proof follows the same scheme as the proof of Theorem~\ref{theorem:matrixdft},
but we adopt a more concise presentation to fit the available column space.
We start by substituting \eqref{eqn:forward} into
\eqref{eqn:inverse}, replacing $m$ and $n$ by $\m$ and $\n$ respectively
to keep the indices distinct:
\begin{align*}
\mat{f}[m, n] &= {\frac{1}{M N}}\sum_{u=0}^{M-1}\sum_{v=0}^{N-1}
                e^{\mat{J} 2\pi\frac{mu}{M}}\\
                &\times\left[\sum_{\m=0}^{M-1}\sum_{\n=0}^{N-1}
                e^{-\mat{J} 2\pi\frac{\m u}{M}}
                \mat{f}[\m,\n]
                e^{-\mat{K} 2\pi\frac{\n v}{N}}\right]\\
                &\times
                e^{\mat{K} 2\pi\frac{nv}{N}}
\end{align*}
The scale factors can be moved outside both summations,
and replaced with their product $1/M N$;
and the exponentials of the outer summations can be moved inside the inner,
because they are constant with respect to the summation indices $\m$ and $\n$.
At the same time, adjacent exponentials with the same root of $-1$ can be merged.
With these changes{, and omitting the scale factor to save space},
the right-hand side of the equation becomes:
\begin{equation*}
\sum_{u=0}^{M-1}\sum_{v=0}^{N-1}
\sum_{\m=0}^{M-1}\sum_{\n=0}^{N-1}
e^{\mat{J} 2\pi\frac{(m-\m)u}{M}}
\mat{f}[\m,\n]
e^{\mat{K} 2\pi\frac{(n-\n)v}{N}}
\end{equation*}
We now isolate out from the inner pair of summations the case where $\m=m$ and $\n=n$.
In this case the exponentials reduce to identity matrices, and we have:
\begin{equation*}
\frac{1}{MN}\sum_{u=0}^{M-1}\sum_{v=0}^{N-1}\mat{f}[m,n]
\end{equation*}
This sums to $\mat{f}[m,n]$, which is the original two-dimensional signal,
as required.
To complete the proof we have to show that the rest of the summation,
excluding the case $\m=m$ and $\n=n$, reduces to zero.
Dropping the scale factor, and changing the order of summation,
we have the following inner double summation:
\begin{equation*}
\sum_{u=0}^{M-1}\sum_{v=0}^{N-1}
e^{\mat{J} 2\pi\frac{(m-\m)u}{M}}
\mat{f}[\m,\n]
e^{\mat{K} 2\pi\frac{(n-\n)v}{N}}
\end{equation*}
Noting that the first exponential and \mat{f} are independent
of the second summation index $v$, we can move them outside
the second summation (we could do similarly with the exponential
on the right and the first summation):
\begin{equation*}
\sum_{u=0}^{M-1}
e^{\mat{J} 2\pi\frac{(m-\m)u}{M}}
\mat{f}[\m,\n]
\sum_{v=0}^{N-1}
e^{\mat{K} 2\pi\frac{(n-\n)v}{N}}
\end{equation*}
and, as in Theorem \ref{theorem:matrixdft},
the summation on the right is over an integral number of cycles
of cosine and sine, and therefore vanishes.
\qed
\end{proof}
Notice that it was not necessary to assume that \mat{J} and \mat{K}
were orthogonal: it is sufficient that each be a root of $-1$.
This has been verified numerically using the two-dimensional code
given in the Appendix.

\section{Discussion}
We have shown that any discrete Fourier transform in an algebra
that has a matrix representation, can be formulated in the way
shown here.
This includes the complex, quaternion, biquaternion, and Clifford
algebras (although we have demonstrated only certain cases of
Clifford algebras, we believe the result holds in general).
{This observation provides a theoretical unification of
diverse hypercomplex DFTs.}

Several immediate possibilities for further work, as well as
ramifications, now suggest themselves.
Firstly,
the study of roots of $-1$ is accessible from the matrix
representation as well as direct representation in whatever
algebra is employed for the transform.
All of the results obtained so far in hypercomplex algebras,
and known to the authors \cite[pp\,203, 209]{Hamiltonpapers:V3:7},
\cite{10.1007/s00006-006-0005-8,10.1007/s00006-010-0240-x},
were achieved by working \emph{in the algebra} in question,
that is by algebraic manipulation of quaternion,
biquaternion or Clifford multivector values.
An alternative approach would be to work in the equivalent
matrix algebra, but this seems difficult even for the lower
order cases.
Nevertheless, it merits further study
because of the possibility of finding a systematic approach
that would cover many algebras in one framework.
Following the reasoning in \S\,\ref{sec:mystery},
it is possible to define matrix roots of $-1$ that appear not
to be isomorphic to any Clifford or quaternion algebra,
and these merit further study.

Secondly, the matrix formulation presented here lends itself
to analysis of the structure of the transform, including
possible factorizations for fast algorithms, as well as
parallel or vectorized implementations for single-instruction,
multiple-data (\textsc{simd}) processors, and of course,
factorizations into multiple complex FFTs as has been done
for quaternion FFTs (see for example \cite{SangwineEll:2000b}).
In the case of matrix roots of $-1$ which do
not correspond to Clifford or quaternion algebras, analysis of
the structure of the transform may give insight into possible
applications of transforms based on such roots.

Finally, at a practical level, hypercomplex transforms implemented
{directly} in hypercomplex arithmetic are likely to be much
faster than any implementation based on matrices,
but the simplicity of the matrix exponential formulation
{discussed in this paper},
and the fact that it can be computed using standard real or complex
matrix arithmetic, {\emph{without using a hypercomplex library},}
means that the matrix exponential formulation provides a very
simple reference implementation which can be used for
verification of {the correctness of} hypercomplex code.
{This is an important point, because verification of the
correctness of hypercomplex FFT code is otherwise non-trivial.
Verification of inversion is simple enough, but establishing that
the spectral coefficients have the correct values is much less so.
}

\appendix
\section{\matlab code}
We include here two short \matlab functions
for computing the forward transform given in \eqref{eqn:forward},
and \eqref{eqn:twodforward},
\emph{apart from the scale factors}. The inverses
can be computed simply by interchanging the input and output
and negating the matrix roots of $-1$.
Neither function is coded for speed, on the contrary the coding
is intended to be simple and easily verified against the equations.
\begin{alltt}
{\color{blue}function} F = matdft(f, J)
M = size(f, 2);
F = zeros(size(f));
{\color{blue}for} m = 0:M-1
  {\color{blue}for} u = 0:M-1
    F(:, u + 1) = F(:, u + 1) {\color{blue}...}
    + expm(-J .* 2 .* pi .* m .* u./M) {\color{blue}...}
    * f(:, m + 1);
  {\color{blue}end}
{\color{blue}end}
\end{alltt}

\begin{alltt}
{\color{blue}function} F = matdft2(f, J, K)
A = size(J, 1);
M = size(f, 1) ./ A;
N = size(f, 2) ./ A;
F = zeros(size(f));
{\color{blue}for} u = 0:M-1
  {\color{blue}for} v = 0:N-1
    {\color{blue}for} m = 0:M-1
      {\color{blue}for} n = 0:N-1
        F(A*u+1:A*u+A, A*v+1:A*v+A) = {\color{blue}...}
        F(A*u+1:A*u+A, A*v+1:A*v+A) + {\color{blue}...}
        expm(-J .* 2*pi .* m .* u./M) {\color{blue}...}
        * f(A*m+1:A*m+A, A*n+1:A*n+A) {\color{blue}...}
        * expm(-K .* 2*pi .* n .* v./N);
      {\color{blue}end}
    {\color{blue}end}
  {\color{blue}end}
{\color{blue}end}
\end{alltt}

\nocite{Hamilton:1848} % This is needed for the Hamiltonpapers:V3:7 paper.
\bibliographystyle{unsrt}
\bibliography{IEEEabrv,paper}

\end{document}